\newtheorem{Theorem}{Theorem}
\newtheorem{Lemma}[Theorem]{Lemma}
\newtheorem{Corollary}[Theorem]{Corollary}
\newtheorem{Proposition}[Theorem]{Proposition}
\newtheorem{Remark}[Theorem]{Remark}
\newcommand{\eps}{\varepsilon}
\newcommand\vphi{\varphi}
\newcommand\al{\alpha}
\newcommand\be{\beta}
\newcommand\de{\delta}
\newcommand\cM{\mathcal  M}
\newcommand\BC{ {\mathbb C}}
\newcommand\BQ{ {\mathbb  Q}}
\newcommand\BZ{{\mathbb  Z}}
\newcommand\BR{ {\mathbb  R}}
\newcommand\BP{ {\mathbb  P}}
\newcommand\bfu{\mbox {\bf  u}}
\newcommand\bfw{\mbox {\bf  w}}
\newcommand\bfz{\mbox {\bf  z}}
\newcommand\Ker{\rm{Ker}\/}
\newcommand\id{\rm{id}}
\newcommand\lcm{\rm{lcm}\/}
\newcommand\inv{^{-1}}
\def\mapright#1{\smash{\mathop{\longrightarrow}\limits^{{#1}}}}
\def\inv{^{-1}}
\begin{document}
\title[On mixed plane curves of polar degree 1
]
{On mixed plane curves of polar degree 1
}

\author
[M. Oka ]
{Mutsuo Oka }
\address{\vtop{
\hbox{Department of Mathematics}
\hbox{Tokyo  University of Science}
\hbox{26 Wakamiya-cho, Shinjuku-ku}
\hbox{Tokyo 162-8601}
\hbox{\rm{E-mail}: {\rm oka@rs.kagu.tus.ac.jp}}
}}
\keywords {Mixed weighted homogeneous, polar action, degree}
\subjclass[2000]{14J17, 14N99}

\begin{abstract}
Let $f(\bfz,\bar\bfz)$ be a mixed strongly  polar homogeneous polynomial
of $3$ variables $\bfz=(z_1,z_2, z_3)$. It
 defines a Riemann surface 
$V:=\{[\bfz]\in \BP^{2}\,|\,f(\bfz,\bar\bfz)=0 \}$
in the complex projective space
$\BP^{2}$.
We will show that for an arbitrary given $g\ge 0$, there exists a
mixed polar homogeneous polynomial with polar degree 1 which defines a
 projective surface of genus $g$. For the construction, we introduce 
a new type of weighted homogeneous polynomials which we call
{\em polar weighted homogeneous polynomials of twisted join type}.
\end{abstract}
\maketitle

\maketitle

\section{Introduction}
Let $f(\bfz,\bar\bfz)$ be a strongly polar   homogeneous mixed polynomial 
of $n$-variables  $\bfz=(z_1,\dots, z_n)\in \BC^n$ 
with polar degree $q$ and radial
degree
$d$.
Recall that a strongly polar  homogeneous polynomial
$f(\bfz,\bar\bfz)$
satisfies the equality (\cite{MC}):
\begin{eqnarray}\label{action}
 f((t,\rho)\circ\bfz,\overline{(t,\rho)\circ\bfz})=t^{d}\rho^{q}f(\bfz,\bar\bfz),\quad
  (t,\rho)\in \BR^+\times S^1.
\end{eqnarray}
Here $(t,\rho)\circ\bfz$ is defined by the usual action
$(t,\rho)\circ\bfz\,=\,(t\rho z_1,\dots,t\rho z_n)$.
Let $\tilde V$ be the mixed affine hypersurface
\[
 \tilde V=f\inv(0)=\{\bfz\in \BC^n\,|\, f(\bfz,\bar\bfz)=0\}.
\]
We assume that $\tilde V$ has an isolated singularity at the origin. 
Let $f:\BC^n\setminus \tilde V\to \BC^*$  be the global 
Milnor fibration defined by $f$
and let $F$ be the fiber.
Namely $F$ is the hypersurface $f\inv(1)\subset \BC^n$.
The monodromy map $h:F\to F$  is defined by
\[
 h(\bfz)=(\eta z_1,\dots,\eta z_n),
\quad \eta=\exp(\frac{2\pi\,i}{q}). 
\]
We consider the smooth  projective hypersurface $V$ defined by
\[
 V=\{[\bfz]\in \BP^{n-1}\,|\, f(\bfz,\bar\bfz)=0\}.
\]

By (\ref{action}),
if  $\bfz\in f\inv(0)$  and $\bfz'$ is in the  same $\BR^+\times S^1$ orbit of $\bfz$, then $\bfz'\in f\inv(0)$.
Thus the hypersurface $V=\{[\bfz]\in \BP^{n-1}|f(\bfz)=0\}$ is well-defined.
Consider the quotient map
 $\pi: \BC^n\setminus\{O\}\to \BP^{n-1}$
and its restriction to the Milnor fiber
$\pi:\,F\to \BP^{n-1}\setminus V$.
This is a $q$-cyclic covering map.
In the previous paper, we have shown  that 
\begin{Theorem}{\rm (Theorem 11, \cite{MC})}
The embedding degree of $V$ is equal to the polar degree $q$.
\end{Theorem}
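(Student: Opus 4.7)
The plan is to reduce the assertion to a computation on a generic complex $2$-plane slice, matching the polar winding of $f$ against a link-theoretic count of intersection points. Let $D$ denote the embedding degree of $V$. Because $V\subset\BP^{n-1}$ is invariant under the $\BC^*$-action coming from (\ref{action}) and inherits an algebraic structure from $\BP^{n-1}$, it is the zero locus of some homogeneous polynomial $g$ of degree $D$, and the isolated singularity assumption on $\tilde V$ at the origin implies that $V$ itself is smooth. The goal is then to show $D=q$.

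First I would pick a generic complex line $L\subset\BP^{n-1}$ meeting $V$ transversally in $D$ points $p_1,\dots,p_D$, and let $\tilde L\subset\BC^n$ be the corresponding $2$-plane through the origin. Then $\tilde V\cap\tilde L$ is the cone over $L\cap V$, namely $D$ distinct complex lines $\ell_1,\dots,\ell_D$ through $O$ in $\tilde L$, and the restriction $f|_{\tilde L}$ remains strongly polar homogeneous of polar degree $q$ for the induced $\BR^+\times S^1$-action.

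Fix $\bfz_0\in\tilde L\setminus\tilde V$ and consider the loop $\gamma(\theta)=e^{i\theta}\bfz_0$, $\theta\in[0,2\pi]$. By (\ref{action}) one has $f(\gamma(\theta))=e^{iq\theta}f(\bfz_0)$, so the winding number of $f\circ\gamma$ about $0\in\BC^*$ equals $q$; equivalently $f_*[\gamma]=q$ in $H_1(\BC^*)=\BZ$. On the other hand, $\gamma$ is a Hopf fiber of $\tilde L\setminus\{O\}\to\BP(\tilde L)=L$, and any two distinct Hopf fibers in the unit sphere of $\tilde L$ link with linking number one. Hence in $H_1(\tilde L\setminus(\tilde V\cap\tilde L))$ we get the identity $[\gamma]=[\mu_1]+\cdots+[\mu_D]$, where $\mu_i$ is a small meridian around $\ell_i$. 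Applying $f_*$ yields $q=\sum_{i=1}^{D}f_*[\mu_i]$.

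The crux of the argument, and the step I expect to require the most care, is showing that $f_*[\mu_i]=1$ for each $i$. In the holomorphic case this is automatic because $f$ is locally a coordinate at a smooth point of $\tilde V$, but for a mixed polynomial the local winding of $f$ around the smooth locus of $\tilde V$ is not a priori equal to one. I would extract this from the Milnor fibration hypothesis: the fact that $f\colon\BC^n\setminus\tilde V\to\BC^*$ is a locally trivial fibration forces $f/|f|$ to wind simply around $\tilde V$ at every smooth point, so $f_*[\mu_i]=1$. With that step secured, the previous computation gives $q=D$, the desired equality of polar degree and embedding degree.
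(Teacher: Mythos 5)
Your slicing skeleton --- a generic line $L$, the Hopf fiber $\gamma$ with $f_*[\gamma]=q$, and the relation $[\gamma]=\sum_i [\mu_i]$ in the complement of the cone $\tilde V\cap\tilde L$ --- is sound, and it is essentially the route of the proof cited from \cite{MC} (the present paper does not reprove the theorem). But two of your steps contain genuine errors. First, $V$ is \emph{not} algebraic: a mixed hypersurface is the zero set of a polynomial in $\bfz$ and $\bar\bfz$, and in general no homogeneous holomorphic $g$ of degree $D$ cuts it out; indeed the whole point of this paper is to produce mixed curves in $\BP^2$ of embedding degree $1$ and arbitrary genus $g>0$, which cannot be algebraic by the genus--degree formula. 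Consequently a generic line meets $V$ transversally in some number $N$ of points that is in general strictly larger than the embedding degree: $D$ is the \emph{signed} count $\sum_i\epsilon_i$, not the geometric count $N$, and your argument as written would prove the false statement $q=N$.

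Second, the crux claim $f_*[\mu_i]=1$ is false, and the justification offered for it fails as well. Local triviality of $f$ on the complement does not force simple winding: $f(z_1,z_2)=z_1^2$ fibers $\BC^2\setminus\{z_1=0\}$ locally trivially over $\BC^*$, yet the meridian winds twice. Under the paper's smoothness hypothesis ($0$ a regular value of $f$ off the origin) the local winding is $\pm1$, but the sign $-1$ genuinely occurs in the paper's own key examples: in $h_{q,r,j}$ the anti-holomorphic factor $(\bar w_1^{r-j}-\be\bar w_2^{r-j})$ contributes $r-j$ link components along which the meridian winding of $f$ is $-1$ --- this is exactly why the link has $r_{q,r,j}=q+2(r-j)$ components while the polar degree is only $q$. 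The repair is straightforward and is what the intended argument does: orient $V$ by the co-orientation that $f$ induces on its normal bundle (together with the complex orientation of $\BP^{n-1}$); with that orientation the local intersection sign of $L$ with $V$ at $p_i$ equals $f_*[\mu_i]$ on the nose, so your identity $q=\sum_i f_*[\mu_i]$ becomes $q=\sum_i\epsilon_i=D$ directly, with no need for --- and no truth to --- the positivity of each individual term.
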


First we observe that 
\begin{Proposition}
The Euler characteristics satisfy the following equalities.
 \begin{enumerate}\label{formula1}
\item
$\chi(F)=q\,\chi(\BP^{n-1}\setminus V)$.
     \item  $\chi(\BP^{n-1}\setminus V)=n-\chi(V)$ and 
     $\chi(V)=n-\chi(F)/q$.
\item The following sequence is exact.
$$1\to \pi_1(F)\mapright{\pi_{\sharp}} \pi_1(\BP^{n-1}\setminus V)\to
     \BZ/q\BZ\to 1.$$
 \end{enumerate}
\end{Proposition}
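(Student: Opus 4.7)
The plan is to exploit the fact, already recorded in the excerpt, that $\pi:F\to \BP^{n-1}\setminus V$ is a $q$-sheeted cyclic covering map, together with standard additivity and multiplicativity properties of the Euler characteristic and elementary covering space theory.

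For (1), since $\pi$ is an unbranched $q$-sheeted topological covering, multiplicativity of the Euler characteristic under finite covers immediately gives $\chi(F)=q\,\chi(\BP^{n-1}\setminus V)$. For (2), decompose $\BP^{n-1}=V\sqcup(\BP^{n-1}\setminus V)$. Because $\tilde V$ has an isolated singularity only at the origin, $V\subset\BP^{n-1}$ is a smooth closed submanifold, and additivity of the Euler characteristic for this constructible decomposition yields $\chi(\BP^{n-1})=\chi(V)+\chi(\BP^{n-1}\setminus V)$. Combined with $\chi(\BP^{n-1})=n$ this gives the first equality of (2), and substituting (1) into it produces $\chi(V)=n-\chi(F)/q$.

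For (3), the covering $\pi$ is regular, with deck transformation group generated by the monodromy $h(\bfz)=(\eta z_1,\dots,\eta z_n)$ where $\eta=\exp(2\pi i/q)$; this deck group is therefore cyclic of order $q$. Provided $F$ is connected, the standard short exact sequence associated to a connected regular covering with abelian deck group then yields
\[
1\to \pi_1(F)\xrightarrow{\pi_{\sharp}}\pi_1(\BP^{n-1}\setminus V)\to \BZ/q\BZ\to 1.
\]

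The main point requiring genuine verification, and the only step that is not a pure formality, is the connectedness of $F$. One obtains this from the global Milnor fibration $f:\BC^n\setminus\tilde V\to\BC^*$: for $n\ge 2$ the total space is connected (it is the complement of an isolated-singularity hypersurface in $\BC^n$), and the homotopy long exact sequence of this fibration, together with $\pi_1(\BC^*)=\BZ$ and the surjectivity of $f_\sharp$ (which follows from the $\BR^+\times S^1$-action (\ref{action})), forces $\pi_0(F)$ to be trivial. Once $F$ is connected, the identification of the deck group with the full $\BZ/q\BZ$, rather than a proper quotient, follows because the monodromy $h$ has exact order $q$ on $F$.
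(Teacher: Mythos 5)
Your overall route is the same as the one the paper tacitly relies on (the paper states this Proposition without proof, treating it as an immediate consequence of the $q$-cyclic covering $\pi:F\to \BP^{n-1}\setminus V$): multiplicativity of $\chi$ under the $q$-fold cover for (1), additivity over $\BP^{n-1}=V\sqcup(\BP^{n-1}\setminus V)$ with $\chi(\BP^{n-1})=n$ for (2), and the short exact sequence of a connected regular covering with cyclic deck group $\langle h\rangle\cong\BZ/q\BZ$ for (3). You are also right to isolate connectedness of $F$ as the one point of substance: without it, the image of $\pi_{\sharp}$ could have index strictly smaller than $q$ and exactness in (3) would fail (for $n=1$ it does fail: $F$ consists of $q$ points).

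However, your justification of that very point is circular and does not work as stated. The $\BR^+\times S^1$-action satisfies $f((t,\rho)\circ\bfz,\overline{(t,\rho)\circ\bfz})=t^{d}\rho^{q}f(\bfz,\bar\bfz)$, so composing $f$ with an orbit loop $\theta\mapsto e^{i\theta}\circ\bfz_0$ produces a loop in $\BC^*$ of winding number $q$, not $1$. Hence the action only shows that the image of $f_{\sharp}:\pi_1(\BC^n\setminus\tilde V)\to\pi_1(\BC^*)\cong\BZ$ contains $q\BZ$ --- which is exactly the subgroup at issue and cannot rule out a disconnected fiber with $q$ components. The correct argument uses a meridian: since $\tilde V$ has an isolated singularity at the origin, $f$ is a submersion at every point of $\tilde V\setminus\{0\}$, so a small loop linking $\tilde V$ near such a point maps under $f$ to a small circle around $0\in\BC$, i.e.\ has winding number $\pm 1$; this gives surjectivity of $f_{\sharp}$ and hence $\pi_0(F)=1$ via the homotopy sequence of the fibration. (One must also know $\tilde V\ne\{0\}$ so that such a smooth point exists; for $n\ge 2$ and polar degree $q\ge 1$ this is automatic, because an $S^1$-orbit in $S^{2n-1}$ is null-homotopic while $f$ restricted to it would wind $q\ne 0$ times if $f$ were nonvanishing on the sphere.) With this repair, and the routine remark that additivity of $\chi$ for the non-compact piece $\BP^{n-1}\setminus V$ is legitimate (it is an even-dimensional manifold, so $\chi=\chi_c$), your proof is complete and agrees with the paper's intended argument.
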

\begin{Corollary}
If $q=1$, the projection $\pi:F\to\BP^{n-1}\setminus V$ is a
 diffeomorphism.
\end{Corollary}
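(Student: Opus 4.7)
The plan is to read off the corollary directly from the $q$-cyclic covering structure of $\pi|_F$ that was set up in the text just before Proposition \ref{formula1}: a $q$-cyclic covering with $q=1$ is one-sheeted, and a one-sheeted smooth covering map is a diffeomorphism.

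To make the setup fully explicit, I would unpack the fiber count using the polar homogeneity identity (\ref{action}). The fibers of $\pi:\BC^n\setminus\{O\}\to\BP^{n-1}$ are the $\BC^*$-orbits, which coincide with the $\BR^+\times S^1$-orbits. For $[\bfz]\in\BP^{n-1}\setminus V$, the fiber $\pi\inv([\bfz])\cap F$ is the set of $(t,\rho)\circ\bfz$ with $(t,\rho)\in\BR^+\times S^1$ satisfying $t^d\rho^q f(\bfz,\bar\bfz)=1$. Writing $f(\bfz,\bar\bfz)=re^{i\theta}$ with $r>0$, the modulus equation $t^d r=1$ has the unique positive solution $t=r^{-1/d}$, while the phase equation $\rho^q=e^{-i\theta}$ has exactly $q$ solutions on $S^1$. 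When $q=1$ there is a unique $\rho$, so $\pi|_F$ has a single preimage over each point of $\BP^{n-1}\setminus V$, i.e.\ it is bijective.

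For smoothness of the inverse, I would note that $\pi:\BC^n\setminus\{O\}\to\BP^{n-1}$ is a smooth $\BC^*$-fibration and that $F$ meets each $\BC^*$-orbit over $\BP^{n-1}\setminus V$ transversally; this transversality follows from the fact that the defining equation $t^d\rho^q f(\bfz,\bar\bfz)=1$ is regular in the orbit parameters $(t,\rho)$, which is built into the covering-map assertion already established before Proposition \ref{formula1}. Hence $\pi|_F$ is a local diffeomorphism, and a bijective local diffeomorphism between smooth manifolds is a diffeomorphism.

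The main obstacle is pedantic rather than substantive: one must point to transversality (equivalently, that $\pi|_F$ is a submersion) to pass from bijection to diffeomorphism. Since this is exactly the content of the covering statement recorded just before Proposition \ref{formula1}, the corollary is essentially immediate once the case $q=1$ of the fiber-count computation above is noted.
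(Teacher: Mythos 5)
Your argument is correct and is essentially the paper's own: the corollary is stated without proof because it follows immediately from the $q$-cyclic covering structure of $\pi:F\to\BP^{n-1}\setminus V$ established just before Proposition \ref{formula1}, and your explicit fiber computation (unique $t=r^{-1/d}$, unique $\rho$ when $q=1$) merely spells out why the covering is one-sheeted. Your remark that bijectivity plus the local diffeomorphism property of a covering yields a diffeomorphism is exactly the intended (implicit) reasoning.
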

\begin{Corollary}\label{genus formula1}
Assume that $n=3$. Then the genus $g(V)$ of $V$ is given by the formula:
\[
 g(V)=\frac 12\left(\frac{\chi(F)}{q}-1
\right)
\]
\end{Corollary}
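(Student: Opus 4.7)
The plan is to combine the Euler characteristic identity in part (2) of Proposition \ref{formula1} with the standard topological formula for the genus of a compact Riemann surface. Since $n=3$, part (2) of the proposition gives
\begin{equation*}
\chi(V) = 3 - \chi(F)/q.
\end{equation*}
On the other hand, the assumption that $\tilde V$ has an isolated singularity at the origin together with the strong polar homogeneity of $f$ implies that the projective variety $V\subset\BP^{2}$ is a smooth (closed) complex submanifold of complex dimension one, hence a closed orientable real surface. For such a surface the classical identity $\chi(V)=2-2g(V)$ holds.

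The proof then proceeds by equating the two expressions for $\chi(V)$: substituting $\chi(V)=2-2g(V)$ into the displayed formula yields $2-2g(V) = 3 - \chi(F)/q$, which after rearrangement gives exactly
\begin{equation*}
g(V)=\tfrac{1}{2}\!\left(\chi(F)/q - 1\right).
\end{equation*}

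There is essentially no obstacle beyond the verification that $V$ is smooth and closed so that the genus formula applies. This should be noted explicitly, but it is immediate from the hypotheses already in force: closedness is automatic from the definition of $V$ as a projective variety, and smoothness follows because a non-smooth point of $V$ would lift via the conical structure induced by the $\BR^+\times S^1$-action in (\ref{action}) to a non-isolated singular point of $\tilde V$, contradicting the standing assumption. Once this is observed, the corollary follows by a single line of arithmetic.
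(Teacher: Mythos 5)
Your proposal is correct and follows exactly the route the paper intends: the corollary is stated without a written proof precisely because it is immediate from part (2) of the Proposition ($\chi(V)=3-\chi(F)/q$ for $n=3$) combined with $\chi(V)=2-2g(V)$ for the closed smooth surface $V$, which is what you do. Your added remark that smoothness of $V$ follows from the isolated singularity of $\tilde V$ via the $\BR^+\times S^1$-action is consistent with the paper's standing hypotheses (the paper simply calls $V$ ``the smooth projective hypersurface'').
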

The monodromy map $h:F\to F$ gives free $\BZ/q\BZ$ action on $F$.
 Thus
using the periodic monodromy argument in \cite{Milnor}, we get
\begin{Proposition}
The zeta function of the monodromy $h:F\to F$ is given by
\[
 \zeta(t)=(1-t^{q})^{-\chi(F)/q}.
\]In particular, if $q=1$,
$h=\id_F$ and 
$\zeta(t)=(1-t)^{-\chi(F)}$.
\end{Proposition}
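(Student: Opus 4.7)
The plan is to apply the periodic monodromy formula from Milnor's book (as the statement indicates), relying crucially on the free $\BZ/q\BZ$-action on $F$ given by part~(3) of the preceding Euler-characteristic proposition. Since $\eta^q=1$, the monodromy $h$ satisfies $h^q=\id_F$; moreover, $\pi\colon F\to\BP^{n-1}\setminus V$ is a $q$-fold cyclic covering whose group of deck transformations is generated by $h$, so the $\BZ/q\BZ$-action on $F$ is free. Consequently, for $1\le k\le q-1$ the iterate $h^k$ has no fixed point on $F$, while $h^q=\id_F$.

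Because the isolated-singularity hypothesis guarantees that $F$ has the homotopy type of a finite CW complex, the Lefschetz fixed-point theorem applies and gives $L(h^k)=0$ whenever $q\nmid k$ and $L(h^k)=\chi(F)$ whenever $q\mid k$. Plugging these values into the standard identity
$$\log \zeta(t) \;=\; \sum_{k=1}^{\infty} \frac{L(h^k)}{k}\,t^k$$
collapses the series to $\sum_{j\ge 1}\frac{\chi(F)}{jq}\,t^{jq} = -\frac{\chi(F)}{q}\log(1-t^q)$, and exponentiating yields $\zeta(t)=(1-t^q)^{-\chi(F)/q}$. The case $q=1$ is then immediate: $\eta=1$ forces $h=\id_F$, and the formula specializes to $\zeta(t)=(1-t)^{-\chi(F)}$.

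The only mildly delicate point is justifying the use of Lefschetz on the non-compact Milnor fiber $F$; this is standard under the isolated-singularity hypothesis, which reduces $F$ up to equivariant homotopy to a compact polyhedron on which $h$ acts simplicially. Once that is in hand, the proof is essentially bookkeeping driven entirely by the freeness of the cyclic action, so I expect no substantial obstacle beyond verifying that the mixed-polynomial setting admits the same Lefschetz formalism as the holomorphic one treated in Milnor.
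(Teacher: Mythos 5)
Your proposal is correct and follows essentially the same route as the paper, which simply invokes the freeness of the $\BZ/q\BZ$-action together with Milnor's periodic monodromy argument --- precisely the Lefschetz-number computation ($L(h^k)=0$ for $q\nmid k$, $L(h^k)=\chi(F)$ for $q\mid k$) that you carry out explicitly. Your added care about applying the Lefschetz theorem to the non-compact fiber $F$ is a reasonable elaboration of a point the paper leaves implicit, not a departure from its method.
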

\subsection{Projective mixed Curves }
Let   $C$ be  a smooth $C^\infty$  surface  embedded  in $\BP^2$
and let  $g$ be the genus of $C$ and let $q$ be the  embedding degree
of $C$.
It is known that the following inequality is satisfied.
\[
 g\ge \frac{(q-1)(q-2)}2.
\]
This was first conjectured by  R.  Thom and it has been proved by
many people.
For example see Kronheimer-Mrowka, \cite{Kronheimer-Mrowka}.
We are interested to present $C$ as a mixed algebraic curve
in the smallest embedding  degree $q$ of  a Riemann surface of
a given
genus $g$ as a mixed algebraic curve. (So we are not interested in the
embedding with $q=0$.)
In our previous paper,  we have used the join type construction starting
from a strongly  polar homogeneous polynomial of two variables
$f(z_1, z_2,\bar z_1,\bar z_2)$ of polar degree $q$ and radial degree $q+2r$
and we considered 
\[
 g(z_1,z_2, z_3,\bar z_1,\bar z_2, \bar z_3)=f(z_1, z_2,\bar z_1,\bar
 z_2)+
z_3^{q+r}\bar z_3^r.
\]
Using such a polynomial,
we have shown that there exists a mixed curve
of a given genus  $g$ with the embedding degree $2$ (\cite{MC}).
Note that if degree $q=1$, the join theorem (\cite{Molina})
says that the Euler number of the Milnor fiber of $g$ is 1
(i.e., the Milnor number is 0) and thus we only get genus 0.
Thus to get a mixed curve of polar degree $1$ and the genus arbitrary
large,
we have to find another type of polynomials.
This is the reason we introduce {\em polar weighted homogeneous
polynomials
of twisted join type} (See \S 3). For example, in the above setting, we consider
the polynomial:
\[
  g'(z_1,z_2, z_3,\bar z_1,\bar z_2, \bar z_3)=f(z_1, z_2,\bar z_1,\bar
 z_2)+
\bar z_2 z_3^{q+r}\bar z_3^{r-1}.
\]
 Using  polynomials of this type, we will show that
{\em there exists  a mixed surface with the polar degree $q=1$
for any $g$ (Theorem \ref{main2}, Corollary \ref{main3}).}

This paper is a continuation of our previous papers \cite{OkaMix,OkaBrieskorn,MC} and we use the same 
notations as those we have used previously.
\section{Mixed projective curves}
Let $ \cM(q+2r,q;n)$ be the space of strongly polar
  homogeneous polynomials
of n-variables $z_1,\dots, z_n$
with polar degree $q$ and radial degree $q+2r$.
\subsection{Important mixed affine curves}
We consider the  following mixed strongly polar homogeneous polynomial
of two variables:
\[
 h_{q,r,j}(\bfw,\bar \bfw)=(w_1^{q+j}\bar w_1^j+w_2^{q+j}\bar
   w_2^{j})(w_1^{r-j}-\al w_2^{r-j})(\bar w_1^{r-j}-\be \bar
   w_2^{r-j}),\quad r\ge j\ge 0
\]
with $\al,\be\in \BC^*$ generic. This polynomial plays a key role for
the construction.
Note that $h_{q,r,j}$ is a strongly  polar homogeneous polynomial with radial degree
$q+2r$ and
polar degree $q$ respectively i.e., $h_{q,r,j}\in \cM(q+2r,q;2)$.
Then the Milnor fiber $H_{{q,r,j}}:=h_{q,r,j}\inv(1)$ of $h_{q,r,j}$ is connected.
The Euler characteristic of $\chi(H_{q,r,j}^*)$ ( where $H_{{q,r,j}}^*=H_{{q,r,j}}\cap \BC^{*2}$)
is  given by 
\[
 \chi(H_{{q,r,j}}^*)=-r_{q,r,j}\,\times \,q\quad 
\text{and}\,\,\,\,  \chi(H_{{q,r,j}})=-r_{q,r,j}\,q + 2q
\]
where $r_{q,r,j}$ is
the
 link component number
of the mixed curve $C=h_{q,r,j}\inv(0)$.
 Note that  the link component number
$ r_{q,r,j}$ is given by
$ r_{q,r,j}=q+2(r-j)$
by Lemma 64, \cite{OkaMix}.
Thus
\begin{Proposition}\label{H(q,r,j)}
 \[
  \chi(H_{{q,r,j}})\,=\,-q\,((q-2)\,+\,2\,(r-j))
 \]
 \end{Proposition}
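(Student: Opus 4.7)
The plan is to read the proposition as essentially a substitution. The two formulas for $\chi(H_{q,r,j}^*)$ and $\chi(H_{q,r,j})$ in terms of $r_{q,r,j}$ are already displayed in the preceding paragraph, and Lemma 64 of \cite{OkaMix} furnishes the explicit value $r_{q,r,j}=q+2(r-j)$. Plugging this into the formula for $\chi(H_{q,r,j})$ gives
\[
\chi(H_{q,r,j})=-q\bigl(q+2(r-j)\bigr)+2q=-q\bigl((q-2)+2(r-j)\bigr),
\]
which is the claimed identity.

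If one wished to reprove the preparatory formulas from scratch, the natural approach is as follows. First, I would restrict the $q$-cyclic cover $\pi\colon F\to\BP^{n-1}\setminus V$ of the earlier proposition to $n=2$ and to $H_{q,r,j}^*\subset (\BC^*)^2$. The base becomes $\BP^1\setminus(V\cup\{[1:0],[0:1]\})$, where $V$ is the projectivisation of the link and consists of $r_{q,r,j}$ points; its Euler characteristic equals $2-(r_{q,r,j}+2)=-r_{q,r,j}$, so $\chi(H_{q,r,j}^*)=-q\,r_{q,r,j}$. Second, I would observe that $H_{q,r,j}\setminus H_{q,r,j}^*$ comprises the solutions of $h_{q,r,j}=1$ with $w_1=0$ or $w_2=0$. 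Setting $w_1=0$ reduces the equation to $\al\be\,w_2^{q+r}\bar w_2^{r}=1$, polar homogeneous of polar degree $q$, with exactly $q$ solutions on the uniquely determined circle; the case $w_2=0$ is symmetric and contributes $q$ more. Hence $\chi(H_{q,r,j})=\chi(H_{q,r,j}^*)+2q=-q\,r_{q,r,j}+2q$.

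The only step carrying genuine content is the link component count $r_{q,r,j}=q+2(r-j)$, which is established in \cite{OkaMix}; once this is accepted, the proposition reduces to the bookkeeping above, so the main obstacle is essentially administrative: combining the two ingredients correctly and keeping track of the extra $2q$ points lying on the coordinate axes.
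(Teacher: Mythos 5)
Your proof is correct and takes essentially the same route as the paper: the proposition is obtained exactly by substituting the link component count $r_{q,r,j}=q+2(r-j)$ from Lemma 64 of \cite{OkaMix} into the displayed formula $\chi(H_{q,r,j})=-r_{q,r,j}\,q+2q$, which is all the paper's own ``Thus'' amounts to. Your supplementary covering-space reconstruction of the preparatory formulas is also sound and consistent with the paper's machinery; note only that applying the paper's Proposition $\chi(F)=q\,\chi(\BP^{n-1}\setminus V)$ directly to $H_{q,r,j}$ (with $V\subset\BP^1$ the $r_{q,r,j}$ points) yields $\chi(H_{q,r,j})=q\,(2-r_{q,r,j})$ in one step, without first splitting off the torus part $H_{q,r,j}^*$.
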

\subsection{Join type polynomials} We consider the following
 strongly polar homogeneous
polynomial of join type.
\[\begin{split}
&f_{q,r,j}(\bfz,\bar\bfz)= h_{q,r,j}(\bfw,\bar\bfw)+z_3^{q+r}\bar z_3^r,
\quad \bfw=(z_1,z_2)\\
      \end{split}
\] 
The the Milnor fiber $F_{{q,r,j}}=f_{q,r,j}\inv(1)$ of $f_{q,r,j}$ is connected.
By the Join theorem ( Cisneros-Molina \cite{Molina}),
$F_{q,r,j}$  is a simply connected 2-dimensional CW-complex so that 
\[\begin{split}
 \chi(F_{q,r,j})&=-(q-1)\chi(H_{q,r,j})+ q\\
&=q(q-1)(q-2)+2q(q-1)(r-j)+q.
\end{split}
\]
Let $C_{q,r,j}$ be the projective curve defined by
$\{f_{q,r,j}(\bfz,\bar\bfz)=0\}$ in $\BP^2$. By Corollary \ref{genus formula1},
the genus $g(C_{q,r,j})$ of $C_{q,r,j}$ is given by
\[
 g(C_{q,r,j})=\frac{(q-1)(q-2)}2+(q-1)(r-j)\ge
\frac{(q-1)(q-2)}2.
\]
For $q=2$, we get
\[
 g(C_{2,r,j})=(r-j)\ge 0.
\]
Thus this shows that for arbitrary $g\ge 0$, the mixed curve $C_{2,g+j,j}$
is a curve of genus $g$ and the embedding degree $2$.
Note that $g(C_{1,r,j})=0$. Thus $q=1$ gives only  rational curves.
Therefore to get a mixed  curve with the embedding degree $1$, the join type polynomials
can not be used.
\section{Twisted join type polynomial}
In this section, we introduce a new class of mixed polar weighted polynomials
which  we use  to construct  curves with embedded degree 1.
Let $f(\bfz,\bar\bfz)$ be a polar weighted homogeneous polynomial
of $n$-variables $\bfz=(z_1,\dots, z_n)$.
Let $Q={}^t(q_1,\dots, q_n),\,P={}^t(p_1,\dots,p_n)$
be the radial and polar weight respectively  and 
let $d,\,q$ be the radial and polar degree respectively.
For simplicity, we call that
$Q'={}^t(q_1/d,\dots, q_n/d)$ and $P'={}^t(p_1/q,\dots, p_n/q)$
{\em the normalized radial weights} and {\em the normalized polar
weights}
respectively.
Consider the mixed polynomial of $(n+1)$-variables:
\[
 g(\bfz,\bar\bfz,w,\bar w)\,=\, f(\bfz,\bar\bfz)+
\bar z_n w^a{\bar w}^b,\quad a>b.
\]
Consider the rational numbers $\bar q_{n+1},\,\bar p_{n+1}$
 satisfying
\[
\frac {q_n}{d}+(a+b)\,\bar q_{n+1}=1,\quad -\frac{p_n}{q}+(a-b)\, \bar p_{n+1}=1.
\]
We assume that $q_n<d$ so that $\bar q_{n+1},\bar p_{n+1}$  are
positive rational numbers.
The polynomial $g$ is a polar weighted homogeneous polynomial with 
the normalized radial and polar weights 
$\widetilde{Q'}={}^t(q_1/d,\dots, q_n/d,\bar q_{n+1})$ and
$\widetilde{P'}={}^t(p_1/q,\dots, p_n/q,\bar p_{n+1})$ respectively.
The radial and polar degree of $g$
are  given by \lcm$(d,denom(\bar q_{n+1}))$ and \lcm$(q,denom(\bar p_{n+1}))$
where $denom(x)$ is the denominator of $x\in \BQ$.
We call $g$ {\em a twisted join  of $f(\bfz,\bar \bfz)$
and $\bar z_n w^a{\bar w}^b$}. 
We say that $g$ is a polar weighted homogeneous polynomial of
{\em  twisted join type}. A twsited join type polynomial behaves
differently than the simple join type, as we will see below.

We recall that 
$f(\bfz,\bar \bfz)$ is  called to be {\em 1-convenient} if
the restriction of $f$ to each coordinate hyperplane
$f_i:=f|_{\{z_i=0\}}$ is non-trivial for  $i=1,\dots,n$ (\cite{OkaPolar})
\begin{Lemma}\label{key1} Assume that $n\ge 2$ and $f$ is 1-convenient.
 Then 
$$\phi_{\sharp}:\,\pi_1((\BC^*)^n\setminus F_f^*)\cong \BZ^{n}\times
 \BZ$$
is an isomorphiam
where $\phi$ is  the canonical mapping
 $\phi: (\BC^*)^n\setminus F_f^*\to (\BC^*)^n\times (\BC\setminus\{1\})$
 defined by 
 $\phi(\bfz)=(\bfz,f(\bfz,\bar\bfz))$
and $F_f^*:=f\inv(1)\cap (\BC^*)^n$.
 \end{Lemma}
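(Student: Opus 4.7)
The plan is to identify natural generators on both sides, then verify both surjectivity and injectivity of $\phi_\sharp$. First I would note that
\[
 \pi_1\bigl((\BC^*)^n\times(\BC\setminus\{1\})\bigr)\cong \BZ^n\oplus \BZ,
\]
with the $n$ standard generators $\alpha_i$ coming from loops around each coordinate axis of $(\BC^*)^n$, and the extra $\BZ$ generated by a small meridian $\mu$ around $1\in\BC\setminus\{1\}$. Writing $U:=(\BC^*)^n\setminus F_f^*$ and $\iota\colon U\hookrightarrow(\BC^*)^n$ for the inclusion, the map $\phi_\sharp$ takes the form $[\gamma]\mapsto(\iota_\sharp[\gamma],(f|_U)_\sharp[\gamma])$, so my task is to build a generating set of $\pi_1(U)$ mapping correctly under this pair with no further relations.

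For surjectivity I will use 1-convenience to lift each $\alpha_i$. Since $f_i:=f|_{\{z_i=0\}}$ is a non-trivial mixed polynomial in $n-1\ge 1$ variables, I can choose a basepoint $\bfz^{(0)}\in U$ with $z^{(0)}_i$ very small so that $f(\bfz^{(0)})$ is close to $f_i$ evaluated at the remaining coordinates and, for a generic choice, bounded away from $1$. The loop $\tilde\alpha_i(\theta)$ obtained by rotating only the $i$-th coordinate through a full circle then stays inside $U$, projects to $\alpha_i$, and carries $f$ through a small disk disjoint from $1$, so its class maps to $(\alpha_i,0)$. For the meridian, I will pick a smooth point of $F_f^*=f^{-1}(1)\cap(\BC^*)^n$ (which exists because $F_f^*$ is a real codimension-$2$ subset with nonempty smooth stratum, cooriented canonically by the complex structure on $\BC$) and take a small transverse $2$-disk through it; its boundary $\tilde\mu$ lies in $U$, and $f\circ\tilde\mu$ winds once around $1$, hitting the last generator.

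The main obstacle is injectivity. Given $[\gamma]\in\ker\phi_\sharp$, the first component produces a singular disk $\Delta\colon D^2\to(\BC^*)^n$ bounding $\gamma$, and I would put $\Delta$ into general position so that it meets $F_f^*$ transversely in finitely many points of the smooth stratum. The algebraic intersection number of $\Delta$ with $F_f^*$, computed with the above coorientation, equals the winding number of $f\circ\gamma$ around $1$, which is zero by the second component; hence the intersection points pair up with opposite signs and can in principle be cancelled. For $n\ge 3$ I would finish by the classical Whitney trick in ambient real dimension $\ge 6$. For $n=2$ the Whitney trick does not apply directly, and this low-dimensional case is the hard part: there I would exploit the polar $S^1$-action on $(\BC^*)^2$, which preserves the smooth stratum of $F_f^*$ and, combined with the complex structure, provides enough room to construct cancelling arcs between paired intersection points and then push $\Delta$ off $F_f^*$ into $U$. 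Throughout, polar weighted homogeneity is what controls the singular locus of $F_f^*$, while 1-convenience is what keeps the basepoint, the loops $\tilde\alpha_i$, and the cancelling arcs all compatible with one another.
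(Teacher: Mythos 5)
There is a genuine gap in your injectivity step, and it is located exactly where you flagged the difficulty. Transversality and the second component of $\phi_\sharp$ only give you that the \emph{total} algebraic intersection number of the disk $\Delta$ with $F_f^*$ vanishes; to actually cancel a pair of opposite-sign intersection points you need a Whitney disk whose boundary circle (built from an arc in $\Delta$ and an arc in $F_f^*$) is null-homotopic in the complement $(\BC^*)^n\setminus F_f^*$ and whose interior can be made embedded and disjoint from $F_f^*$. That obstruction is the equivariant intersection number in $\BZ[\pi_1((\BC^*)^n\setminus F_f^*)]$ — an element of the very group you are trying to compute — so the argument is circular as it stands; the integer count is only its image under augmentation. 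Moreover your sketch uses 1-convenience only decoratively (for basepoints and ``compatibility of arcs''), but 1-convenience is exactly what makes the lemma true, and without it your cancellation scheme demonstrably fails: take $f(z_1,z_2)=z_1$, which is polar weighted homogeneous but not 1-convenient. Then $F_f^*=\{1\}\times\BC^*$, the complement is $(\BC^*\setminus\{1\})\times\BC^*$ with $\pi_1\cong F_2\times\BZ$ ($F_2$ free of rank $2$), and the commutator of the loop around $z_1=0$ with the meridian of $F_f^*$ lies in $\ker\phi_\sharp$, bounds a disk in $(\BC^*)^2$ meeting $F_f^*$ in two points of opposite sign — yet it is not null-homotopic in the complement, so those two points can never be cancelled. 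Your $n=2$ case (``the polar $S^1$-action \dots provides enough room'') is an assertion, not an argument, and it is precisely in ambient dimension $4$ that the Whitney trick is unavailable.

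For contrast, the paper avoids all of this $4$-dimensional topology by using the polar $\BR^+\times S^1$-action globally: it makes $\hat f:(\BC^*)^n\setminus f\inv(0)\to\BC^*$ a locally trivial fibration, so $(\BC^*)^n\setminus F_f^*$ deformation retracts onto ${\hat f}\inv(D_{1-\eps})\cup{\hat f}\inv(S_{\eps}(1))$, where ${\hat f}\inv(D_{1-\eps})\simeq(\BC^*)^n$, ${\hat f}\inv(S_\eps(1))\cong F_f^*\times S^1$, and the intersection is $F_f^*$. Van Kampen then gives $\pi_1$ as an amalgamated product, and 1-convenience enters essentially: it makes $i_\sharp:\pi_1(F_f^*)\to\pi_1((\BC^*)^n)\cong\BZ^n$ surjective, which forces the $S^1$-factor to commute with all of $\BZ^n$ and collapses the amalgam to $\BZ^n\times\BZ$. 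If you want to salvage your approach, you would need to replace the integer intersection count by the $\BZ[\pi_1]$-valued count and show it vanishes using 1-convenience — at which point you will essentially be reconstructing the surjectivity of $\pi_1(F_f^*)\to\BZ^n$ that drives the paper's van Kampen argument.
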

 \begin{proof}
Let us use the notations:
 \[
   D_\de:=\{\eta\in \BC||\eta|\le \de\},\quad S_\de(1)=\{\eta\in
  \BC||\eta-1|=\de\}.
  \]
Denote by $\hat f$ the restriction of $f$ to $(\BC^*)^n$.
 The fact that  the mapping 
$\hat f: (\BC^*)^n\setminus f\inv(0)\to
  \BC^*$ is a fibration and 
the inclusion $D_{1-\eps}\cup S^1_{\eps}\hookrightarrow \BC\setminus\{1\}$
is a  deformation retract  implies  the following inclusion is also a deformation retract:
  \[
\iota:\,  {\hat f}\inv(D_{{1-\eps}})\cup {\hat f}\inv(S_{\eps}(1))\subset
  (\BC^*)^n\setminus F_f^*,\quad 0<\eps\ll 1.
  \]
%
  On the other hand, ${\hat f}\inv(S_\eps(1))\cong f\inv(1-\eps)\times
  S^1_\eps(1)\cong F_f^*\times S^1_{\eps}$ and
  $\pi_1(f\inv(S_\eps(1)))\cong \pi_1(F_f^*)\times \BZ$.
The 1-convenience of $f$
implies the homomorphism
$i_{\sharp}:\pi_1(F_f^*)\to \pi_1((\BC^*)^n)$ is surjective.
  Moreover $f\inv(D_{1-\eps})$ is homotopic to $(\BC^*)^n$,
as $D_{1-\eps}\hookrightarrow \BC$ is a deformation retract. Thus
 the
  assertion follows from the van Kampen lemma, applied 
to the decomposition
\[\begin{split}
 {\hat f}\inv(D_{1-\eps}\cup S^1_{\eps}(1))&={\hat f}\inv(D_{{1-\eps}})\cup {\hat f}\inv(S_{\eps}(1)),\\
{\hat f}\inv(D_{{1-\eps}})\cap {\hat f}\inv(S_{\eps}(1))&={\hat
   f}\inv(1-\eps)\cong F_f^*.
\end{split}
\]
  \end{proof}
Put $F_{f_n}:=f_n\inv(1)=F_f\cap\{z_n=0\}\subset \BC^{n-1}$
with $f_n:=f|_{\BC^n\cap\{z_n=0\}}$.
  \begin{Theorem} \label{main1}Assume that $n\ge 2$ and  $f$ is 1-convenient
  and 
 $g(\bfz,\bar\bfz,w,\bar w)$ is a twisted join polynomial as above.
 Then \begin{enumerate}
       \item
            the Milnor fiber of $g$, $F_g=g\inv(1)$, is simply connected.
            \item
            The Euler characteristic of $F_g$ is given by the formula:
            \[
             \chi(F_g)=-(a-b-1)\chi(F_f)+(a-b)\chi(F_{f_n}).
                       \]
\end{enumerate}
  \end{Theorem}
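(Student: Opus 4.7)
The plan is to stratify $F_g$ by the vanishing of $z_n$ and $w$, prove~(2) via additivity of the compactly-supported Euler characteristic, and prove~(1) by combining Lemma~\ref{key1} with the fact that small meridian loops around codimension-two real-analytic strata of $F_g$ are null-homotopic in $F_g$. First partition $F_g = A\sqcup B\sqcup C$ with
\[
A = F_g\cap\{z_n=0\},\qquad B = F_g\cap\{z_n\ne 0,\,w=0\},\qquad C = F_g\cap\{z_n\ne 0,\,w\ne 0\}.
\]
Since $g|_{z_n=0}=f_n$ with $w$ free, $A = F_{f_n}\times\BC_w$; imposing $w=0$ with $z_n\ne 0$ forces $f=1$, so $B = F_f\cap\{z_n\ne 0\} = F_f\setminus F_{f_n}$; and on $C$ the equation $g=1$ rewrites as $w^a\bar w^b = (1-f(\bfz,\bar\bfz))/\bar z_n$. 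Writing $w=re^{i\theta}$ shows that $w\mapsto w^a\bar w^b\colon\BC^*\to\BC^*$ is the map $(r,\theta)\mapsto(r^{a+b},(a-b)\theta)$, a regular $(a-b)$-fold covering (using $a>b$), so the projection $C\to\{\bfz:\,z_n\ne 0,\,f\ne 1\}$ is also an $(a-b)$-fold covering.

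For~(2) I apply additivity of $\chi_c$ to this partition. Using $\chi(\BC)=1$, $\chi(\{z_n\ne 0\})=\chi(\BC^{n-1}\times\BC^*)=0$, and $F_f\cap\{z_n=0\}=F_{f_n}$, the three strata contribute $\chi(A)=\chi(F_{f_n})$, $\chi(B)=\chi(F_f)-\chi(F_{f_n})$, and $\chi(C)=(a-b)(\chi(F_{f_n})-\chi(F_f))$. Summing gives $\chi(F_g)=-(a-b-1)\chi(F_f)+(a-b)\chi(F_{f_n})$ as claimed.

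For~(1) set $F_g^{\circ}=F_g\cap((\BC^*)^n\times\BC^*)$. Its complement in $F_g$ is a finite union of real-codimension-two real-analytic subvarieties: the sets $F_g\cap\{z_i=0\}$ for $i=1,\dots,n$ (codimension two for $i<n$ by 1-convenience of $f$, and for $i=n$ from the description of $A$), together with $F_g\cap\{w=0\}\cong F_f$. General position yields a surjection $\pi_1(F_g^{\circ})\twoheadrightarrow\pi_1(F_g)$ whose kernel is normally generated by the meridians of these strata. On the other hand, the analysis of $C$ shows that the projection $F_g^{\circ}\to(\BC^*)^n\setminus F_f^*$ is an $(a-b)$-fold covering, so Lemma~\ref{key1} identifies $\pi_1(F_g^{\circ})$ with a subgroup of index $a-b$ inside $\BZ^{n+1}$, generated downstairs by $\mu_1,\dots,\mu_n,\mu_{\tau}$ (with $\mu_i$ the meridian around $\{z_i=0\}$ and $\mu_{\tau}$ the loop along which $f$ encircles $1$). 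Projecting the meridians of the various codimension-two strata back to $(\BC^*)^n\setminus F_f^*$ and computing winding numbers one expects: $\mu_i$ for $i<n$, $\mu_n\mu_{\tau}^{\pm 1}$ for $i=n$, and $\mu_{\tau}^{\pm(a-b)}$ for $\{w=0\}$. Imposing these relations on $\pi_1(F_g^{\circ})$ collapses every generator, giving $\pi_1(F_g)=1$.

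The hardest step is the last one: pinning down the precise element of $\pi_1(F_g^{\circ})\subset\BZ^{n+1}$ represented by each meridian and then verifying that the resulting relations really do annihilate the whole index-$(a-b)$ subgroup. The projection of the meridian of $F_g\cap\{z_n=0\}$ is the most delicate, since along it both $\bar z_n$ and $f$ vary, and one must read off the winding of $f$ around $1$ from the infinitesimal behaviour of $g$ along the stratum $\{z_n=0\}$.
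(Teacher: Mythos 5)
Your part (2) is the paper's computation in different packaging: the paper first splits $F_g$ into $F_{g_n}=F_{f_n}\times\BC$ and $F_g^{*\{n\}}=F_g\cap\{z_n\ne 0\}$, then splits the latter into the section $\{w=0\}$ over $F_f^{*\{n\}}$ and the $(a-b)$-fold covering over its complement --- precisely your strata $A$, $B$, $C$ --- and uses the same additivity of the Euler characteristic. Nothing to change there. For part (1) you also share the paper's two main ingredients, namely the $(a-b)$-fold cyclic covering $F_g^{\circ}\to(\BC^*)^n\setminus F_f^*$ and Lemma~\ref{key1}, but you then diverge: you try to pin down the covering subgroup of $\BZ^{n+1}$ and the explicit classes of all the meridians, and you stop exactly at that point (``one expects''). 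As written, this is a genuine gap: the simple connectivity rests on winding-number claims you acknowledge you have not verified.

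The paper finishes differently, and its trick removes exactly the step you flag as hardest. From the covering, $\pi_1(F_g^*)$ has index $a-b$ in $\pi_1((\BC^*)^n\setminus F_f^*)\cong\BZ^{n+1}$, hence is free abelian of rank $n+1$; then, since $g$ itself is 1-convenient, normal slices to the nonempty divisors $F_g\cap\{z_i=0\}$ and $F_g\cap\{w=0\}$ show that $\iota_{\sharp}\colon\pi_1(F_g^*)\to\pi_1((\BC^*)^{n+1})\cong\BZ^{n+1}$ is surjective, and a surjection $\BZ^{n+1}\to\BZ^{n+1}$ is automatically injective ($\BZ^{n+1}$ is Hopfian: a proper quotient can never be free abelian of rank $n+1$). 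Under this isomorphism the coordinate meridians \emph{are} the standard basis of $\pi_1(F_g^*)$, and each bounds a disk in $F_g$, so $\pi_1(F_g)=1$ with no winding-number computation at all. For the record, your expected classes are correct and your route does close: the obstruction to lifting a loop through $w\mapsto w^a\bar w^b$ is the winding of $(1-f)/\bar z_n$, so the covering subgroup is $\{(m_1,\dots,m_n,m_\tau)\,:\,m_n+m_\tau\equiv 0 \bmod (a-b)\}$; along a meridian of $\{z_n=0\}$ with $w$ frozen one reads $1-f=\bar z_n w^a\bar w^b$ directly from $g=1$, giving the class $e_n-e_\tau$; a meridian of $\{w=0\}$ gives $(a-b)e_\tau$; and $e_1,\dots,e_{n-1}$, $e_n-e_\tau$, $(a-b)e_\tau$ generate exactly that index-$(a-b)$ subgroup. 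So your argument can be completed, but the Hopfian comparison with the ambient torus is the missing idea that makes the delicate meridian bookkeeping unnecessary.
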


\begin{proof}Consider $F_g^*:=F_g\cap (\BC^*)^{n+1}$
and  the projection map
 $\pi:\,F_g^*\to (\BC^*)^n$ defined by $(\bfz,w)\mapsto \bfz$.
 Then the image of  $F_g^*$ by $\pi$ is $(\BC^*)^n\setminus F_f^*$ and
 $\pi:F_g^*\to (\BC^*)^n\setminus F_f^*$
 gives an $(a-b)$-cyclic covering.
In fact the fiber $\pi\inv(\bfz)$
is given as 
\[
 \pi\inv(\bfz)=\{(\bfz,w)\,|\, w^a\bar w^b=\frac{1-f(\bfz,\bar\bfz)}{\bar z_n}\}
\]
Therefore
\[
 \pi_1((\BC^*)^n\setminus
 F_f^*)/\pi_{\sharp}(\pi_1(F_g^*))\cong \BZ/(a-b)\BZ.
\]
 By Lemma \ref{key1},
 we see that  $\pi_1((\BC^*)^n\setminus F_f^*)\cong \BZ^{n+1}$
 and  any subgroup of $\BZ^{n+1}$ with a finite index is a free abelian
 group of the same rank $n+1$. Therefore
 $\pi_1(F_g^*)\cong \BZ^{n+1}$. Note that $g(\bfz,\bar \bfz,w,\bar w)$
is 1-convenient. Thus taking normal slice of each smooth  divisor
 $z_i=0$
in $F_g$,  we see that 
\[
 \iota_{\sharp}:\pi_1(F_g^*)\to
 \pi_1((\BC^*)^{n+1})
\]
is  surjective.
Consider the inclusion map $\iota: F_g^*\to (\BC^*)^{n+1}$.
 If  $\iota_{\sharp}$ is not injective,
$\pi_1((\BC^*)^{n+1})\cong \pi_1(F_g^*)/\Ker\,\iota_{\sharp}$
can not be a free abelian group of rank $n+1$.
Thus $\iota_{\sharp}:\pi_1(F_g^*)\to \pi_1((\BC^*)^{n+1})$
is an isomorphism.

 For the proof of the assertion (2), we apply the additivity of the Euler
 characteristic
 to the union $F_g=F_g^{*\{n\}}\cup F_{g_n} $
 where $F_g^{{*\{n\}}}:=F_g\cap \{z_n\ne 0\}$ and $F_{g_n}:=F_g\cap
 \{z_n=0\}$.
Note that $F_{g_n}\cong F_{f_n}\times \BC$.
 Put $\BC^{{*\{n\}}}=\BC^n\cap\{z_n\ne 0\}$ and $F_f^{{*\{n\}}}=F_f\cap
 \{z_n\ne 0\}$.
In the following, we consider the projection
$\pi_n:\,\BC^{n+1}\to \BC^n$ defined by $\pi_n(\bfz,w)=\bfz$.
Note that $\pi_n\inv(F_f)=F_f\times \BC$
and $F_g^{{*\{n\}}}\cap\pi_n\inv( F_f)=\{(\bfz,0)\,|\, \bfz\in F_f^{{*\{n\}}} \}$.
 \[
  \begin{split}
 \chi(F_g^{{*\{n\}}})&=\chi(F_g^{{*\{n\}}}\setminus\pi_n\inv( F_f))+ \chi(F_g^{{*\{n\}}}\cap\pi_n\inv( F_f))\\
   &=(a-b)\chi(\BC^{{*\{n\}}}\setminus F_{f}^{{*\{n\}}})+\chi(F_f^{{*\{n\}}})\\
   &=\,-(a-b-1)\chi(F_f^{{*\{n\}}})\\
   \chi(F_{g_n})&=\chi(F_{f_n}\times \BC)=\chi(F_{f_n}).
   \end{split}
 \]
The last equality follows from 
$F_{g_n}=F_{f_n}\times \BC$.
 To complete the proof, we use the additivity of the Euler
 characteristic which gives the  equality
 \[
  \chi(F_f)=\chi(F_f^{{*\{n\}}})+\chi(F_{f_n}).
 \]
\end{proof}
\subsection{Construction of a family of mixed curves with polar  degree q}
Now we are ready to construct a key family of mixed curves with embedding degree q.
Recall the polynomial:
\[
 h_{q,r,j}(\bfw,\bar \bfw):=(z_1^{q+j}\bar z_1^j+z_2^{q+j}\bar
   z_2^{j})(z_1^{r-j}-\al z_2^{r-j})(\bar z_1^{r-j}-\be \bar
   z_2^{r-j}),\quad \bfw=(z_1,z_2).
\]
$h_{q,r,j}(\bfw,\bar\bfw)$ is 1-convenient strongly polar homogeneous
polynomial
with the   radial degree $q+r$ and the  polar degree $ q$ respectively.
The constants $\al,\be$ are generic. For this, it suffices to assume that
$|\al|,|\be|\ne 0,1$ and $|\al|\ne |\be|$.
Consider the twisted join polynomial of 3 variables $z_1,z_2,z_3$:
\[
 s_{q,r,j}(\bfz,\bar \bfz)=h_{q,r,j}(\bfw,\bar\bfw)+\bar z_2
 z_3^{q+r}\bar z_3^{r-1},\quad \bfz=(z_1,z_2,z_3).
\]
Let $F_{q,r,j}=s_{q,r,j}\inv(1)\subset \BC^3$ be the Milnor fiber and let $S_{q,r,j}\subset \BP^2$ be the
corresponding mixed projective curve:
\[
 S_{q,r,j}=\{[\bfz]\in \BP^2\,|\, s_{q,r,j}(\bfz,\bar \bfz)=0\}.
\]
Note that $S_{q,r,j}$ is a smooth mixed curve.
The following describes  the topology of  $F_{q,r,j}$ and $S_{q,r,j}$.
\begin{Theorem}\label{main2}
 \begin{enumerate}
  \item The Euler characteristic of the Milnor fiber
$F_{q,r,j}$ is given by:
       \[
        \chi(F_{q,r,j})=q(q^2-q+1+2(r-j)).\]
        \item The genus of $S_{q,r,j}$ is given by:
        \[
        g(S_{q,r,j})=\frac{q(q-1)}2+(r-j)
        \]
       \end{enumerate}
 \end{Theorem}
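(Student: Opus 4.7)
The plan is to derive part (1) by applying Theorem \ref{main1} to the decomposition
$s_{q,r,j}=h_{q,r,j}(\bfw,\bar\bfw)+\bar z_2\,z_3^{\,q+r}\bar z_3^{\,r-1}$,
regarded as a twisted join with $f=h_{q,r,j}$, $n=2$, $a=q+r$, and $b=r-1$
(so that $a-b=q+1$ and $a-b-1=q$). Part (2) will then drop out of
Corollary \ref{genus formula1}, applied to the curve $S_{q,r,j}\subset\BP^2$,
which is the case $n=3$ of that corollary.

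Before invoking Theorem \ref{main1} I would check its hypotheses.
The condition $n\ge 2$ is immediate; the constraint $q_n<d$ holds
because the radial weight of $z_2$ in $h_{q,r,j}$ is $1$ while $d=q+2r\ge 2$;
and the construction implicitly assumes $r\ge 1$ so that $\bar z_3^{r-1}$
is a genuine monomial. The only substantive check is 1-convenience of
$h_{q,r,j}$, and this is a quick computation:
$h_{q,r,j}|_{z_1=0}=\al\be\,z_2^{q+r}\bar z_2^{r}$ and
$h_{q,r,j}|_{z_2=0}=z_1^{q+r}\bar z_1^{r}$ are both non-trivial.

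Next I would assemble the two Euler characteristics feeding into the
formula $\chi(F_g)=-(a-b-1)\chi(F_f)+(a-b)\chi(F_{f_n})$.
Proposition \ref{H(q,r,j)} gives $\chi(F_f)=\chi(H_{q,r,j})=-q\bigl((q-2)+2(r-j)\bigr)$.
For $F_{f_n}$ I examine the one-variable restriction
$h_{q,r,j}|_{z_2=0}=z_1^{q+r}\bar z_1^{r}$; writing $z_1=te^{i\theta}$, the Milnor
fiber equation $t^{q+2r}e^{iq\theta}=1$ forces $t=1$ together with
$\theta\in\tfrac{2\pi}{q}\BZ$, so $F_{f_n}$ consists of exactly $q$ points and
$\chi(F_{f_n})=q$. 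Substitution into the formula and routine arithmetic
simplification then yield the claim in (1), and a direct application of
$g=\tfrac12(\chi(F)/q-1)$ from Corollary \ref{genus formula1} gives (2).

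I expect no conceptual obstacle beyond careful bookkeeping, because
Theorem \ref{main1} already handles the topological substance
(the cyclic-covering description of $F_g^{*}\to(\BC^*)^n\setminus F_f^*$
coming from Lemma \ref{key1}). The most delicate point is therefore the
1-convenience check for $h_{q,r,j}$, which is precisely the hypothesis that
gates Lemma \ref{key1} and hence Theorem \ref{main1}. One should also
note, for the validity of Corollary \ref{genus formula1}, that $S_{q,r,j}$
is a smooth mixed curve---this is asserted just before the theorem and
rests on the genericity assumption $|\al|,|\be|\ne 0,1$ with $|\al|\ne|\be|$
together with the fact that the twist term $\bar z_2 z_3^{q+r}\bar z_3^{r-1}$
separates the chart $z_3\ne 0$ from the locus of $h_{q,r,j}$.
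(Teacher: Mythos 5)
Your strategy is exactly the paper's: the proof given in the paper is the same two-line argument (quote Proposition \ref{H(q,r,j)} for $\chi(H_{q,r,j})$, note $\chi(H_{q,r,j}\cap\{z_2=0\})=q$, and invoke Theorem \ref{main1}), and your preliminary checks --- 1-convenience of $h_{q,r,j}$ via the restrictions $h_{q,r,j}|_{z_1=0}=\al\be\,z_2^{q+r}\bar z_2^r$ and $h_{q,r,j}|_{z_2=0}=z_1^{q+r}\bar z_1^r$, the identification $a=q+r$, $b=r-1$, $a-b=q+1$, and the count $\chi(F_{f_n})=q$ from $z_1^{q+r}\bar z_1^r=1$ --- are all correct, and indeed more careful than what the paper writes down.

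However, the one step you wave through --- ``substitution into the formula and routine arithmetic simplification then yield the claim in (1)'' --- is precisely where the argument breaks. With your (correct) inputs $a-b-1=q$, $a-b=q+1$, $\chi(F_f)=-q\bigl((q-2)+2(r-j)\bigr)$ and $\chi(F_{f_n})=q$, Theorem \ref{main1}(2) gives
\[
\chi(F_{q,r,j})=q^2\bigl((q-2)+2(r-j)\bigr)+q(q+1)=q\bigl(q^2-q+1+2q(r-j)\bigr),
\]
which is \emph{not} the stated $q\bigl(q^2-q+1+2(r-j)\bigr)$: the $(r-j)$ term carries an extra factor of $q$. Correspondingly, Corollary \ref{genus formula1} would give $g(S_{q,r,j})=\frac{q(q-1)}2+q(r-j)$ rather than $\frac{q(q-1)}2+(r-j)$. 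The two expressions agree exactly when $q=1$ or $r=j$; for instance, at $q=2$, $r=1$, $j=0$ one has $\chi(H_{2,1,0})=-4$ and the substitution yields $\chi=-2\cdot(-4)+3\cdot 2=14$ (genus $3$), while the theorem asserts $\chi=10$ (genus $2$). So either the statement of Theorem \ref{main2} contains a slip (a factor of $q$ dropped from the $2(r-j)$ term of Proposition \ref{H(q,r,j)} during substitution), or one of the cited inputs is mis-stated; in any case the stated formulas do not follow from the cited ingredients for $q\ge 2$, and your proof --- exactly like the paper's own --- fails at this substitution. Note that the paper's main application, Corollary \ref{main3}, uses only $q=1$, where the discrepancy vanishes, so that corollary is unaffected either way. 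The lesson here is that the ``routine arithmetic'' you deferred is the entire content of the theorem and must be carried out explicitly; had you done so, you would have detected the inconsistency rather than certified the stated formula.
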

\begin{proof}
Let $H_{q,r,j}=h_{q,r,j}\inv(1)$. Then by
Proposition \ref{H(q,r,j)},
\[
 \begin{split}
\chi(H_{q,r,j})&=-q(q-2+2(r-j))\\
\chi(H_{q,r,j}\cap\{z_2=0\})&=q
\end{split}
\]
and the assertion follows from Theorem \ref{main1}.
\end{proof}
\subsection{Mixed curves with polar degree 1} We  consider the case $q=1,j=0$:
 \[\begin{cases}
  h(\bfw,\bar\bfw)&:=(z_1+z_2)(z_1^r-\al z_2^r)(\bar z_1^r-\be \bar
    z_2^r)\\
f_r(\bfz,\bar\bfz)&:=h(\bfw,\bar\bfw)+\bar z_2 z_3^{r+1}\bar z_3^{r-1}\\
S_r&:=\{[\bfz]\in \BP^2\,|\,f_r(\bfz,\bar\bfz)=0\}.
\end{cases}
 \]
\begin{Corollary}\label{main3}
Let $S_r$ be the mixed curve as above. Then the  degree of $S_r$ is 1
and the genus of $S_r$ is $r$.
\end{Corollary}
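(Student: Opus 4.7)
The plan is to recognize the corollary as the specialization of Theorem \ref{main2} to $(q,j) = (1,0)$, and then to carry out the book-keeping. Writing $h(\bfw,\bar\bfw) = h_{1,r,0}(\bfw,\bar\bfw)$ and $f_r(\bfz,\bar\bfz) = s_{1,r,0}(\bfz,\bar\bfz)$, we have $S_r = S_{1,r,0}$ and the statement we want reads $g(S_r) = \tfrac{1\cdot 0}{2} + (r-0) = r$, while the polar degree equals the embedding degree, which should be $1$.

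First I would verify that the hypotheses of Theorem \ref{main2} (and of the underlying Theorem \ref{main1}) really apply at $(q,j)=(1,0)$. Specifically: $h_{1,r,0}(\bfw,\bar\bfw) = (z_1+z_2)(z_1^r-\al z_2^r)(\bar z_1^r - \be\bar z_2^r)$ is strongly polar homogeneous with radial degree $1+2r$ and polar degree $1$, and it is $1$-convenient because restriction to $z_1=0$ gives $\al\be\, z_2^{r+1}\bar z_2^{r}$ (nonzero) and restriction to $z_2=0$ gives $z_1^{r+1}\bar z_1^{r}$ (nonzero). Moreover $f_r = h + \bar z_2 z_3^{a}\bar z_3^{b}$ with $a=r+1$, $b=r-1$, so $a>b$ and $f_r$ is indeed a twisted join in the sense of \S3; the condition $q_n<d$ (needed to make the weights positive in the twisted join construction) holds since the $z_2$-weight of $h$ is $1$ while the radial degree is $1+2r>1$.

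Next I would simply invoke Theorem \ref{main2}(2) with $q=1$, $j=0$ to conclude $g(S_r) = r$. As a consistency check, one can also compute $\chi(F_r)$ directly from Theorem \ref{main1}: with $a-b=2$,
\[
\chi(F_r) = -\chi(H_{1,r,0}) + 2\,\chi(H_{1,r,0}\cap\{z_2=0\}) = -(1-2r)+2\cdot 1 = 2r+1,
\]
using Proposition \ref{H(q,r,j)} and the fact that $h_{1,r,0}|_{z_2=0} = z_1^{r+1}\bar z_1^{r}$, whose Milnor fiber in $\BC^*$ has Euler characteristic $q=1$. Then Corollary \ref{genus formula1} gives $g(S_r) = \tfrac12(\chi(F_r)/1 - 1) = r$, matching.

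Finally, the degree assertion is immediate: by Theorem 1 of the introduction, the embedding degree of $S_r$ equals the polar degree of $f_r$, and $f_r$ has polar degree $1$ by construction (the polar degree of the twisted join with $a-b=2$ and polar weight chosen so that $-p_n/q + (a-b)\bar p_{n+1} = 1$ agrees with the polar degree $q=1$ of $h_{1,r,0}$). I do not anticipate any genuine obstacle: the work is essentially a sanity-check that the hypotheses of Theorem \ref{main1} and Theorem \ref{main2} are met at $q=1$, $j=0$. The mild subtlety worth recording is that one must use $b=r-1\ge 0$, which forces $r\ge 1$; the case $r=0$ (genus zero) is trivial and can be handled separately.
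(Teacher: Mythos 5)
Your proposal is correct and follows essentially the same route as the paper: the paper's proof likewise obtains $\chi(F_r)=2r+1$ from Theorem \ref{main1} (of which your invocation of Theorem \ref{main2} at $(q,j)=(1,0)$ is just the packaged form) and then applies Corollary \ref{genus formula1}, with the degree claim coming from Theorem 1 as you say. Your added verifications (1-convenience, $a>b$, $q_n<d$, and the caveat that $r\ge 1$ is needed for $b=r-1\ge 0$) are sound details the paper leaves implicit.
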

\begin{proof} Let $F_r=f_r\inv(1)$ be the Milnor fiber of $f_r$.
By Theorem \ref{main1},
we have
$\chi(F_r)=2r+1$. Thus by Corollary \ref{genus formula1}, the assertion
 follows immediately.
\end{proof}
\begin{Remark}$h(\bfw,\bar\bfw)$ can be replaced by
$(z_1^{r+1}-z_2^{r+1})(\bar z_1-\be \bar z_2^r)$ without changing the
topology.
\end{Remark}
 \section{Further embeddings of smooth curves}
Consider a smooth curve $C\subset \BP^2$ with genus $g$. 
 If $C$ is a complex algebraic curve of degree $q$,
they are related by the Pl\"ucker formula
$g=\frac{(q-1)(q-2)}2$.
In particular, $q$ is the positive integer root of 
$x^2-3x+2-2g=0$. Thus for a given $g\ge 1$,  $q$
is unique if it exists.
 In this section, we consider this problem in the category of mixed
 projective curves. Consider the family of mixed curves.
 \[
    S_{q,r,1}:\, h_{q,r,1}(\bfw,\bar\bfw)+\bar z_2z_3^{q+r}\bar z_3^{r-1} 
   \]
We have shown that the genus $g$ is given as follows.
\[
 g=\frac {q(q-1)}2 + r-1.
\]
Assume that $g$  is fixed and we consider the possible degree $q$.
We can solve as
\[
 r=g-\frac{q(q-1)}2 +1.
\]
This shows that
\begin{Theorem}
For a given $g>0$ and $q$
which satisfies the inequality
\[
 g\ge \frac {q(q-1)}2,
\] the mixed curve
$S_{q,r,1}$ with $r=g-\frac{q(q-1)}2 +1$  has genus $g$ and degree $q$.
\end{Theorem}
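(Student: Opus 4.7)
The plan is to deduce this theorem directly from Theorem \ref{main2}(2) by specializing to $j=1$ and solving the resulting linear relation for $r$ in terms of $g$.

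First I would substitute $j=1$ into the genus formula of Theorem \ref{main2}(2) to obtain
\[
g(S_{q,r,1}) = \frac{q(q-1)}{2} + (r-1).
\]
Setting this equal to the prescribed $g$ and solving for $r$ gives precisely $r = g - \frac{q(q-1)}{2} + 1$, so for this choice of $r$ the genus is correct by construction.

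Next I would verify that the polynomial $s_{q,r,1}$ is actually well-defined with the specified $r$. The defining expression contains the factor $(z_1^{r-1}-\al z_2^{r-1})(\bar z_1^{r-1}-\be \bar z_2^{r-1})$ inside $h_{q,r,1}$ and the monomial $\bar z_2 z_3^{q+r}\bar z_3^{r-1}$, both of which require $r \ge 1$; moreover Theorem \ref{main2} is stated in the range $r \ge j$, which in our case is $r \ge 1$. The hypothesis $g \ge q(q-1)/2$ is equivalent to $r \ge 1$, so the construction is legitimate in exactly the stated range.

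Finally I would check the degree claim. By Theorem 1 (the embedding degree equals the polar degree), it suffices to observe that $s_{q,r,1}$ has polar degree $q$. This follows from the fact that $h_{q,r,1}$ is strongly polar homogeneous of polar degree $q$ (noted in Section 2) and the added term $\bar z_2 z_3^{q+r}\bar z_3^{r-1}$ has polar weight $-1 + (q+r) - (r-1) = q$ under the standard $S^1$-action with all polar weights equal to $1$; hence $s_{q,r,1}$ is strongly polar homogeneous of polar degree $q$, giving embedding degree $q$.

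No serious obstacle is anticipated: the substantive topological work is packaged inside Theorem \ref{main1} and Theorem \ref{main2}, and the present statement amounts to a reparameterization of the family $\{S_{q,r,1}\}$ by the target genus $g$ rather than by the auxiliary parameter $r$. The only bookkeeping to watch carefully is the inequality $r \ge 1$ and the polar-weight count for the twisted join term, both of which fall out immediately.
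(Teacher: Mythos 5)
Your proposal is correct and matches the paper's own argument: the paper proves this theorem precisely by setting $j=1$ in the genus formula of Theorem \ref{main2}, obtaining $g=\frac{q(q-1)}{2}+r-1$, and solving for $r=g-\frac{q(q-1)}{2}+1$, with the degree claim resting on the polar degree $q$ of $s_{q,r,1}$ and the embedding-degree theorem. Your extra bookkeeping (verifying $r\ge 1$ is equivalent to $g\ge \frac{q(q-1)}{2}$, and the polar-weight count $-1+(q+r)-(r-1)=q$ for the twisted term) is sound and only makes explicit what the paper leaves implicit.
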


\begin{Remark}Assume that 
\[
 (\sharp)\quad
\frac{q(q-1)}2\ge g\ge \frac {(q-1)(q-2)}2.
\]
For the construction of a curve with $\{g,q\}$ satisfying $(\sharp)$, we can
not use the surface $S_{q,r,1}$.
If $g-\frac{(q-1)(q-2)}2\equiv 0$ mod $q-1$, we can use the mixed curve
$C_{q,r,1}$. If $g\not \equiv \frac{(q-1)(q-2)}2$ mod $q-1$, we do not know if such
an embedding exists.
\end{Remark}
\section{Mixed polar weighted polynomial with polar degree 1 of $n$ variables}
Let us consider
 mixed polar weighted homogeneous 
polynomials of $n$ variables with polar  degree 1. They 
 have the following strong property:
\begin{Theorem}
Let $f(\bfz,\bar\bfz)$ be a polar weighted homogeneous polynomial of
 degree 1
of radial weight $(q_1,\dots, q_n; d)$ and polar weight $(p_1,\dots,p_n;1)$.
Then the Milnor fibration
$\vphi=f/|f|: \,S^{2n-1}\setminus K\to S^1$ with $K=f\inv(0)\cap S^{2n-1}$
is trivial.
In fact, the explicit diffeomorphism is given using the one-parameter 
family of diffeomorphisms of the monodromy flows
$h_\theta\,:\,F\to F_\theta$ with $\theta\in \BR$ and
 $F_\theta:=\vphi\inv(\exp (i\theta))$ and
\[
 h_\theta(\bfz)=\exp(i\theta)\circ \bfz
\]
where $\rho\circ \bfz=(\rho^{p_1}z_1,\dots, \rho^{p_n}z_n)$
and $\rho\in S^1$. Note that $h_{2\pi}=\id$.
The trivialization of the fibration is given by the diffeomorphism
$\psi:   F\times S^1\to S^{2n-1}\setminus K $ which is defined  by
\[
 \psi(\bfz,\exp(i\theta))=h_\theta(\bfz)
\]
\end{Theorem}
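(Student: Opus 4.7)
The plan is to verify that the family $h_\theta(\bfz)=\exp(i\theta)\circ\bfz$ yields the asserted trivialization of $\vphi$ by establishing three facts: each $h_\theta$ is a diffeomorphism of $S^{2n-1}\setminus K$, the family shifts Milnor fibers uniformly by the angle $\theta$, and it is $2\pi$-periodic and therefore descends to an $S^1$-action.

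First I would check that $h_\theta$ preserves both the unit sphere and the link $K$. Since $|\exp(ip_j\theta)z_j|=|z_j|$ for every $j$, one has $|h_\theta(\bfz)|=|\bfz|$, so $h_\theta$ restricts to a diffeomorphism of $S^{2n-1}$. Polar weighted homogeneity with polar degree $1$ then gives
\[
 f(h_\theta(\bfz),\overline{h_\theta(\bfz)})=\exp(i\theta)\,f(\bfz,\bar\bfz),
\]
so $h_\theta(K)=K$ and $\vphi(h_\theta(\bfz))=\exp(i\theta)\,\vphi(\bfz)$. Consequently $h_\theta$ carries $F=\vphi\inv(1)$ diffeomorphically onto $F_\theta=\vphi\inv(\exp(i\theta))$, and more generally each $F_{\theta_0}$ onto $F_{\theta_0+\theta}$.

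Next I would verify the $2\pi$-periodicity: because the polar degree equals $1$, the polar weights $p_1,\dots,p_n$ are integers, hence $\exp(2\pi ip_j)=1$ for every $j$ and therefore $h_{2\pi}=\id$. Thus the family $\{h_\theta\}_{\theta\in\BR}$ descends to a smooth family parametrized by $\exp(i\theta)\in S^1$, and $\psi(\bfz,\exp(i\theta)):=h_\theta(\bfz)$ becomes a well-defined smooth map $F\times S^1\to S^{2n-1}\setminus K$ with $\vphi\circ\psi$ equal to the projection onto $S^1$.

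Finally I would exhibit the smooth inverse explicitly. For $\bfy\in S^{2n-1}\setminus K$, set $\exp(i\theta):=\vphi(\bfy)$ and $\bfz:=h_{-\theta}(\bfy)$; then $\vphi(\bfz)=\exp(-i\theta)\vphi(\bfy)=1$, so $\bfz\in F$ and $h_\theta(\bfz)=\bfy$. This yields $\psi\inv(\bfy)=(h_{-\theta}(\bfy),\vphi(\bfy))$, manifestly smooth in $\bfy$, completing the proof that $\psi$ is a diffeomorphism trivializing $\vphi$. The argument is essentially formal; the only genuine inputs are the polar homogeneity identity $f(\rho\circ\bfz,\overline{\rho\circ\bfz})=\rho\,f(\bfz,\bar\bfz)$ and the integrality of the $p_j$, so there is no substantive obstacle beyond being careful that the $S^1$-action used here is the polar action $\rho\circ\bfz$ rather than the diagonal action, which is precisely why the conclusion is special to polar degree $1$.
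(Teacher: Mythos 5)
Your verification is correct and matches the paper exactly: the theorem's statement already contains the explicit trivialization $\psi(\bfz,\exp(i\theta))=h_\theta(\bfz)$, and the paper offers no further written proof, treating precisely the checks you perform (the identity $f(\rho\circ\bfz,\overline{\rho\circ\bfz})=\rho f(\bfz,\bar\bfz)$ for polar degree $1$, preservation of $S^{2n-1}$ and $K$, $2\pi$-periodicity, and the explicit smooth inverse $\bfy\mapsto(h_{-\theta}(\bfy),\vphi(\bfy))$) as routine. One cosmetic remark: the integrality of the polar weights $p_1,\dots,p_n$ holds by definition of a polar weighted homogeneous polynomial rather than being a consequence of the polar degree being $1$; what degree $1$ buys is that $\vphi\circ h_\theta=\exp(i\theta)\vphi$, so the full period of the monodromy flow is exactly $2\pi$ and the flow closes up into the asserted $S^1$-trivialization.
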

Observe that the trivialization is not an extension of the 
trivialization of the normal bundle of $K$ in $S^{2n-1}$.
\begin{Corollary}
Let $f(\bfw)$, $\bfw=(z_1,z_2)$ be a polar weighted homogeneous polynomial with polar
 degree 1. Then the link $K:=f\inv(0)\cap S^3$
is trivially  fibered over the circle.
Thus we have
\[
 \pi_1(S^3\setminus K)\cong \BZ\times \pi_1(F)
\]
where $F$ is the Milnor fiber.
\end{Corollary}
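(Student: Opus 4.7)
The plan is to deduce this corollary directly from the preceding theorem by specializing to $n=2$. The theorem supplies an explicit diffeomorphism
\[
\psi\colon F\times S^1\longrightarrow S^{2n-1}\setminus K,\qquad \psi(\bfz,\exp(i\theta))=h_\theta(\bfz),
\]
which realizes the Milnor fibration $\vphi=f/|f|$ as a trivial bundle over $S^1$ with fiber $F$. Setting $n=2$ immediately gives a diffeomorphism $F\times S^1\cong S^3\setminus K$, and this is precisely what is meant by saying that the link $K$ is trivially fibered over the circle in the sense of its Milnor fibration.

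For the fundamental group assertion, I would then simply apply the functor $\pi_1$ to this diffeomorphism. The product formula for fundamental groups yields
\[
\pi_1(S^3\setminus K)\;\cong\;\pi_1(F\times S^1)\;\cong\;\pi_1(F)\times \pi_1(S^1)\;\cong\;\BZ\times\pi_1(F),
\]
which is the stated isomorphism. (Since $f$ has an isolated singularity at the origin, $F$ is connected, so there is no ambiguity in taking $\pi_1(F)$.)

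There is no real obstacle: the corollary is a formal consequence of the theorem together with a single application of the product formula. The only point worth emphasizing, and the remark following the theorem already flags it, is that the $\BZ$-factor is generated by the monodromy flow $\theta\mapsto h_\theta(\bfz)=\exp(i\theta)\circ\bfz$ rather than by a meridian of $K$. Consequently the product decomposition does \emph{not} assert that $K$ is an unknotted link in $S^3$; it asserts only that the fibration $\vphi$ on the complement admits a global section, which is enough to split $\pi_1(S^3\setminus K)$ as a direct (not just semidirect) product.
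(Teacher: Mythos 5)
Your proposal is correct and matches the paper exactly: the paper states this corollary without a separate proof, treating it as the immediate specialization of the preceding theorem to $n=2$, where the trivialization $\psi\colon F\times S^1\to S^3\setminus K$ gives $\pi_1(S^3\setminus K)\cong \BZ\times\pi_1(F)$ by the product formula. Your closing remark about the $\BZ$-factor coming from the monodromy flow rather than a meridian is precisely the caveat the paper itself makes after the theorem.
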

Let $f(\bfz,\bar\bfz)$ be a polar weighted  homogeneous polynomial
of $n$ variables.
On the topology of the hypersurface $F=f\inv(1)$, we propose the following basic
question.

\noindent
{\em Is the homological (or homotopical) dimension of $F$ is  $n-1$ 
under a certain condition (say mixed non-degeneracy)?
}

\vspace{.3cm} We say that $f(\bfz,\bar \bfz)$ satisfies {\em the
homological dimension property} if the assertion is satisfied for $F=f\inv(1)$.
There are several cases in which the assertion  is true.
\begin{enumerate}
\item
Simplicial type: Assume that  $f(\bfz,\bar\bfz)$ is a simplicial type 
polar weighted homogeneous polynomial. Then the homological dimension of
$F$ is at most $n-1$. This follows from Theorem
10, \cite{OkaPolar}.
\item (Join type) Assume that $f(\bfz,\bar \bfz)=h(\bfw,\bar \bfw)+k(\bfu,\bar\bfu)$
where 
$\bfw=(w_1,\dots, w_m)$,
$\bfu=(u_1,\dots,u_\ell)$ and $\bfz=(\bfw,\bfu)$. 
Assume that $h(\bfw,\bar \bfw)$, $k(\bfu,\bar\bfu)$ 
are polar weighted homogeneous polynomials which satisfies the
     homological dimension property. Then $f$ also satisfies the
     property.
This follows from the Join theorem by Cisneros Molino \cite{Molina}.
\end{enumerate}
\def\cprime{$'$} \def\cprime{$'$} \def\cprime{$'$} \def\cprime{$'$}
  \def\cprime{$'$} \def\cprime{$'$} \def\cprime{$'$} \def\cprime{$'$}

\end{document}